\documentclass[10pt]{article}

\usepackage[
  paperwidth=165mm,
  paperheight=240mm,
  top=18mm,
  bottom=19mm,
  left=17mm,
  right=17mm,
  footskip=10mm
]{geometry}
\usepackage[T1]{fontenc}
\usepackage{lmodern}
\usepackage{amsmath,amssymb,amsthm,mathtools}
\usepackage[english]{babel}
\usepackage{microtype}
\usepackage[hidelinks]{hyperref}

\newtheorem{theorem}{Theorem}[section]
\newtheorem{proposition}[theorem]{Proposition}
\newtheorem{lemma}[theorem]{Lemma}
\newtheorem{corollary}[theorem]{Corollary}

\theoremstyle{definition}
\newtheorem{definition}[theorem]{Definition}
\newtheorem{example}[theorem]{Example}
\newtheorem{remark}[theorem]{Remark}

\newcommand{\F}{\mathbb F}

\newcommand{\A}{\mathcal A}
\newcommand{\B}{\mathcal B}
\DeclareMathOperator{\ord}{ord}

\title{\texorpdfstring{$F$}{F}-sets of arbitrary finite width}
\author{Alessandro Giannoni}
\date{}
\hypersetup{
  pdftitle={F-sets of arbitrary finite width},
  pdfauthor={Alessandro Giannoni}
}

\begin{document}

\maketitle

\begin{abstract}
Ferraguti and Micheli conjectured that non-trivial $F$-sets of every
prescribed width exist over each finite field.  We prove the finite-width
part of their conjecture over $\F_q$ for every $q\neq2,3$.  Fix a suitable
prime $\ell$.  For a degree bound $D$, we consider the saturated family of
all irreducible polynomials $g(X^{\ell^j})$ whose core $g$ has degree at most
$D$.  A factor-descent lemma shows that every irreducible factor arising from
a shifted difference belongs to the same family and has strictly smaller
core degree.  The saturated family is therefore an $F$-set of finite width.
Dirichlet's theorem for $\F_q[X]$ produces core ladders of arbitrary length,
and Kummer lifting reproduces each ladder at infinitely many scales.  These
parallel ladders force the width to be sufficiently large; an appropriate
tail of the nullity filtration then has the prescribed width.

\end{abstract}

\section{Introduction}

Throughout the paper, $q$ is a prime power and $I_q$ denotes the set of monic
nonconstant irreducible polynomials in $\F_q[X]$.  Following Andrade, Miller,
Pratt and Trinh~\cite{AndradeMillerPrattTrinh2014}, an $F$-set is a subset
$A\subseteq I_q$ such that, for every $f\in A$, all monic irreducible divisors
of $f(X)-f(0)$ belong to $A$.  An $F$-set is called \emph{non-trivial} if it
is different from $I_q$.  Thus non-trivial means proper, not necessarily
infinite; whenever infinitude is needed we state it separately.  For example,
$\{X\}$ and $I_q$ are both $F$-sets, the first non-trivial and the second not.

Ferraguti and Micheli~\cite{FerragutiMicheli2016} introduced the width of an
$F$-set.  Their constructions give examples of width one for $q\neq2,3$, and
they proved that examples of width two exist for every $q\neq2,3$; see
\cite[Sections~2 and~3 and Theorem~5.1(a)]{FerragutiMicheli2016}.  They
conjectured the existence of non-trivial $F$-sets of arbitrary width over
every finite field.  The width is defined by repeatedly removing the
irreducibles that have no successor in the set, where $g$ is a successor of
$f$ if $f$ divides $g(X)-g(0)$.  Our main result proves the finite-width part
of their conjecture, with the only possible exceptions $\F_2$ and $\F_3$.

\begin{theorem}\label{thm:main}
Let $q\neq2,3$ be a prime power.  For every integer $r\geq1$, there exists an
infinite, non-trivial $F$-set in $\F_q[X]$ of width exactly $r$.
\end{theorem}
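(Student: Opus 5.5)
The plan follows the abstract.

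\emph{Step 1: choose $\ell$.} Fix a prime $\ell$ with $\ell\mid q-1$; if $q-1$ is a power of $2$, take $\ell=2$ and adjust when $4\nmid q-1$. The aim is Kummer-type control: for monic irreducible $g$ with $g(0)$ not an $\ell$-th power in the right sense, every $g(X^{\ell^j})$ is irreducible. This is the classical Serret/Capelli criterion, and here I expect to need the standard hypotheses, including $4\mid q-1$ when $\ell=2$. The excluded cases $q=2,3$ are exactly those where $q-1$ has no usable prime.

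\emph{Step 2: saturated family and factor descent.} For a degree bound $D$, let $\A_D$ be the set of all irreducible polynomials of the form $g(X^{\ell^j})$ with $j\ge0$ and $\deg g\le D$, where $g$ is the core. For $f=g(X^{\ell^j})\in\A_D$ we have
\[
f(X)-f(0)=\bigl(g(Y)-g(0)\bigr)\big|_{Y=X^{\ell^j}} = Y\,h(Y)\big|_{Y=X^{\ell^j}},
\]
with $\deg h<\deg g$. The factor-descent lemma I want is: every irreducible factor of $h(X^{\ell^j})$ has the form $g'(X^{\ell^{j'}})$ with $\deg g'<\deg g$. To get it, factor $h$ into irreducibles $p$ and examine $p(X^{\ell^j})$. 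Write $p(X^{\ell^{j}})$ as a product of irreducibles of the shape $p'(X^{\ell^{j'}})$, using Capelli's theorem to peel off $\ell$-th roots: each step either stops with an irreducible or replaces $p$ by factors $p_i(X^{\ell})$ with $\deg p_i=\deg p$ up to twists. The subtle point is to make the core degree strictly drop; I would define the core as the minimal $g$ with $f=g(X^{\ell^{j}})$, with $j$ maximal.

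Given that lemma, $\A_D$ is an $F$-set. It is non-trivial because irreducibles whose core has large degree and which are not of the form $g(X^\ell)$ are excluded. It is infinite because the Kummer towers are infinite. Its width is finite by induction on the core degree: an element with core degree $d$ is removed by stage roughly $d+1$, since successors have larger core degree and the layers are bounded by $D$.

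\emph{Step 3: width is large, then trim.} Use Dirichlet's theorem in $\F_q[X]$ to build ladders $g_1,g_2,\dots,g_m$ of cores with $\deg g_i$ increasing and $g_i\mid g_{i+1}(X)-g_{i+1}(0)$. Choose $g_{i+1}$ irreducible in the progression $g_{i+1}\equiv c \pmod{X g_i}$, with the constant term chosen so that it is a non-$\ell$-th power. This keeps the Kummer liftability needed in Step 1. Lifting by $X\mapsto X^{\ell^j}$ gives infinitely many parallel copies of each ladder, so chains of successors of length $m$ persist through many removal stages, and the width of $\A_D$ is at least $m$ for suitable $D$.

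Let $\A^{(k)}$ be the nullity filtration, obtained by iterated removal of elements with no successor. Each $\A^{(k)}$ is again an $F$-set: the removed elements are never needed as divisors, because divisors of $f(X)-f(0)$ have $f$ as a successor. The width of $\A^{(k)}$ equals $w-k$. Taking $k=w-r$ gives width exactly $r$, and infinitude survives because each layer contains infinitely many Kummer lifts.

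\emph{Main obstacle.} I expect the factor-descent lemma to be the hardest part, specifically showing that factors of $h(X^{\ell^j})$ remain in the family with strictly smaller core. The $\ell=2$ case with $q\equiv3\pmod 4$ needs the extra Capelli condition involving $-4c^4$, and I would handle it separately, possibly by choosing $\ell$ odd whenever possible.
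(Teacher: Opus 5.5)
Your proposal is correct and follows the paper's proof essentially step for step. The shared steps are: a $q$-admissible prime $\ell$, the bounded-core saturation $\A_D(\ell)$ ranked by minimal core degree, Dirichlet ladders $g_{i+1}\equiv c\pmod{Xg_i}$ with a fixed non-$\ell$-th power $c$, parallel Kummer lifts giving $r\le W\le D$, and the tail $\A_D(\ell)^{(W-r)}$. That tail is infinite simply because $W-r<W$, not because of the Kummer lifts as you suggest.

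The only real variation is your proof of factor descent by iterated Capelli splitting, which works under admissibility; the paper instead computes $\ord(\beta)$ using Lemma~\ref{lem:valuation}. Note that taking $\ell$ odd when $q\equiv3\pmod4$ is required, not just convenient, because for $\ell=2$ the descent fails there (over $\F_7$, $X^4+1=(X^2+3X+1)(X^2+4X+1)$; cf.\ Remark~\ref{rem:F3-obstruction}).
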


Our proof combines Dirichlet's theorem for $\F_q[X]$ with Kummer lifting.  Its
main technical ingredient is a factor-descent statement for power
substitutions.  This makes it possible to place arbitrarily long ladders of
irreducibles inside a saturated $F$-set whose width is finite and controlled
by the degrees of its cores.  Infinitely many Kummer lifts of the ladders give
the required lower bound, and a suitable tail of the nullity filtration has
width exactly $r$.  The restriction $q\neq2,3$ enters through the existence
of a prime satisfying the Kummer hypotheses; positive finite width over
$\F_2$ and $\F_3$ remains open.

\section{\texorpdfstring{$F$}{F}-sets, successors and width}

We begin with the relation measured by width.  If $f,g\in I_q$, we call $g$ a
\emph{successor} of $f$ when
$$
 f\mid g(X)-g(0).
$$
In these terms the $F$-set condition is a downward-closure condition: together
with $g$, an $F$-set must contain every irreducible polynomial having $g$ as a
successor.

\begin{definition}\label{def:nullity}
Let $A$ be an $F$-set.  Its \emph{nullity} is
$$
 N(A)=\{f\in A:f\nmid g(X)-g(0)\text{ for every }g\in A\}.
$$
Thus $N(A)$ is the set of elements of $A$ having no successor inside $A$.
\end{definition}

Every nonempty $F$-set contains $X$.  Indeed, if $f\in A$, then
$X\mid f(X)-f(0)$, so closure gives $X\in A$.  Moreover, $X$ is its own
successor, because $X\mid X-X(0)$; consequently $X\notin N(A)$.  This is why
width is defined using the first finite term rather than the first empty term.

As observed in \cite[Section~3]{FerragutiMicheli2016}, the set
$A\setminus N(A)$ is again an $F$-set.  Thus the deletion can be repeated.

\begin{definition}\label{def:width}
The \emph{nullity filtration} of an $F$-set $A$ is
$$
 A^{(0)}=A,\qquad
 A^{(n+1)}=A^{(n)}\setminus N\bigl(A^{(n)}\bigr).
$$
The \emph{width} $w(A)$ is the least $n\geq0$ for which $A^{(n)}$ is
finite, if such an $n$ exists; otherwise $w(A)=\infty$.
\end{definition}

This is the definition of Ferraguti and Micheli
\cite{FerragutiMicheli2016}.  In particular, an $F$-set has width zero exactly
when it is finite.  If $A$ is infinite and $N(A)=\varnothing$, then the
filtration is stationary and $w(A)=\infty$.  A finite set may also have empty
nullity: width records the first finite stage, not whether that stage can be
reduced any further.

The following two elementary observations will be used repeatedly.

\begin{lemma}\label{lem:tails}
For every $s,n\geq0$,
\begin{equation}\label{eq:tail}
 \bigl(A^{(s)}\bigr)^{(n)}=A^{(s+n)}.
\end{equation}
In particular, if $w(A)=W<\infty$, then
\begin{equation}\label{eq:tail-width}
 w\bigl(A^{(s)}\bigr)=W-s
 \qquad(0\leq s\leq W).
\end{equation}
\end{lemma}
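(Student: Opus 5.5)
We prove \eqref{eq:tail} by induction on $n$, with $s$ fixed. The point is that the recursion in Definition~\ref{def:width} depends only on the current set. Write $B=A^{(s)}$; this is an $F$-set, because by the observation of \cite[Section~3]{FerragutiMicheli2016} every term of the filtration is again an $F$-set, so $B^{(n)}$ is defined.

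For $n=0$ both sides equal $A^{(s)}$. Now suppose $B^{(n)}=A^{(s+n)}$. Then
$$
 B^{(n+1)}=B^{(n)}\setminus N\bigl(B^{(n)}\bigr)
 =A^{(s+n)}\setminus N\bigl(A^{(s+n)}\bigr)=A^{(s+n+1)}.
$$
The middle equality holds because $N(\cdot)$ is a function of the set alone, as Definition~\ref{def:nullity} shows. This completes the induction.

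For \eqref{eq:tail-width}, fix $0\le s\le W$. The width $w(A^{(s)})$ is the least $n\ge0$ such that $(A^{(s)})^{(n)}=A^{(s+n)}$ is finite. By minimality of $W$, the set $A^{(m)}$ is infinite for every $m<W$, and $A^{(W)}$ is finite.

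Take $n=W-s\ge0$. Then $A^{(s+n)}=A^{(W)}$ is finite. For every $n<W-s$ we have $s+n<W$, so $A^{(s+n)}$ is infinite. Hence the least such $n$ is $W-s$, that is, $w(A^{(s)})=W-s$.

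The filtration is decreasing, so finiteness persists for all $m\ge W$. We do not actually need this, because only minimality enters. There is no real obstacle here. The one point to check is that each $A^{(s)}$ is an $F$-set, so that its width is defined, and this is exactly the cited closure of $A\setminus N(A)$.
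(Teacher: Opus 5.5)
Your proof is correct and is the paper's argument: induction on $n$ for \eqref{eq:tail}, then minimality of $W$ (the terms $A^{(m)}$ are infinite for $m<W$ and $A^{(W)}$ is finite) for \eqref{eq:tail-width}. You also note that each $A^{(s)}$ is an $F$-set, so its width is defined; the paper leaves this point implicit.
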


\begin{proof}
The first identity follows by induction on $n$.  The second follows from the
minimality of $W$: the terms $A^{(m)}$ are infinite for $m<W$, while
$A^{(W)}$ is finite.
\end{proof}

\begin{lemma}\label{lem:rank-bound}
Let $A$ be an $F$-set and suppose that there is a map
$$
 \rho:A\longrightarrow\{0,1,\ldots,D\}
$$
such that $\rho(X)=0$ whenever $X\in A$, the fibre $\rho^{-1}(0)$ is finite,
and, whenever $Q\neq X$ and $P\mid Q(X)-Q(0)$, one has
$\rho(P)<\rho(Q)$.  Then
$$
 A^{(n)}\subseteq\{P\in A:\rho(P)\leq D-n\}
 \qquad(0\leq n\leq D),
$$
and hence $w(A)\leq D$.
\end{lemma}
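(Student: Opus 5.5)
Proof plan: show the containment by induction on $n$, then read off $w(A)\le D$ from finiteness of $\rho^{-1}(0)$.

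\emph{Base case.} For $n=0$ the containment is just $A\subseteq\{P:\rho(P)\le D\}$, which holds because $\rho$ takes values in $\{0,\dots,D\}$.

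\emph{Inductive step.} Assume $A^{(n)}\subseteq\{\rho\le D-n\}$ with $n<D$. Let $P\in A^{(n+1)}$; we must show $\rho(P)\le D-n-1$.
\begin{itemize}
\item If $P=X$, then $\rho(P)=0\le D-n-1$, since $n<D$.
\item If $P\neq X$, then $P\in A^{(n)}\setminus N(A^{(n)})$. So $P$ has a successor $Q\in A^{(n)}$, meaning $P\mid Q(X)-Q(0)$.
\end{itemize}
In the second case we want to apply the strict decrease hypothesis, which needs $Q\neq X$. If $Q=X$, then $P\mid X-0=X$, forcing $P=X$, which is excluded. So $Q\neq X$, and the hypothesis gives $\rho(P)<\rho(Q)\le D-n$, using the induction hypothesis for $Q\in A^{(n)}$. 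Hence $\rho(P)\le D-n-1$.

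The only delicate point is this exclusion of $Q=X$. It is exactly why $X$ is treated separately: $X$ is its own successor, so it never enters the nullity and sits at rank $0$.

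\emph{Conclusion.} Taking $n=D$ gives $A^{(D)}\subseteq\rho^{-1}(0)$, which is finite by hypothesis. So some term of the filtration with index at most $D$ is finite. Since $w(A)$ is the least index with a finite term, $w(A)\le D$.
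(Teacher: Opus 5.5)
Your proof is correct and is essentially the paper's argument. Both run an induction on $n$ whose key point is that $X$ cannot be a successor of any $P\neq X$. The paper phrases the step as ``every element of $A^{(n)}$ of rank exactly $D-n$ lies in $N\bigl(A^{(n)}\bigr)$,'' which is just the contrapositive of your observation that any surviving $P\neq X$ has a successor $Q\in A^{(n)}$ with $Q\neq X$, so that $\rho(P)<\rho(Q)\leq D-n$.
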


\begin{proof}
The assertion is clear for $n=0$.  Suppose it holds for some $n<D$.  If
$P\in A^{(n)}$ has $\rho(P)=D-n>0$, then $P\neq X$.  Moreover, $X$ cannot be
a successor of $P$, since $P\mid X$ would force $P=X$.  Hence every successor
$Q$ of $P$ in $A^{(n)}$ satisfies $Q\neq X$ and therefore
$\rho(Q)>\rho(P)=D-n$, which is impossible by the induction hypothesis.
Thus every element of rank $D-n$ belongs to $N(A^{(n)})$, proving the
inductive step.  At $n=D$, only the finite set $\rho^{-1}(0)$ can remain.
\end{proof}

\section{Kummer lifting and factor descent}

We now record the finite-field facts used in the construction.  For a prime
$p$ and a positive integer $m$, write $v_p(m)$ for the $p$-adic valuation of
$m$.  We also write $\ord(\beta)$ for the multiplicative order of a nonzero
algebraic element $\beta$ over a finite field.  We shall use the standard
degree-order relation
$$
 [\F_Q(\beta):\F_Q]
 =\min\{n\geq1:\ord(\beta)\mid Q^n-1\};
$$
see, for example, \cite[Theorem~3.3]{LidlNiederreiter1997}.  Recall also that
$\F_q^*$ is cyclic of order $q-1$.  Thus, if $\ell\mid q-1$, then $\F_q$
contains a primitive $\ell$-th root of unity and non-$\ell$-th-powers exist.

We isolate the primes used in the power substitutions.

\begin{definition}\label{def:admissible}
A prime $\ell$ is \emph{$q$-admissible} if $\ell\mid q-1$ and either
$\ell$ is odd or $q\equiv1\pmod4$.
\end{definition}

The extra clause matters only for $\ell=2$ and packages the hypotheses needed
both in the Kummer criterion and in the valuation formula below.

\begin{lemma}\label{lem:admissible-exists}
If $q\neq2,3$, then there exists a $q$-admissible prime.
\end{lemma}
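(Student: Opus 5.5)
The plan is a case split on $q-1$, the order of the cyclic group $\F_q^*$. By Definition~\ref{def:admissible}, it is enough to find a prime $\ell$ dividing $q-1$ such that either $\ell$ is odd, or $\ell=2$ and $4\mid q-1$. Since $q\geq2$, we have $q-1\geq1$. The case $q=2$ (where $q-1=1$ has no prime divisor) is excluded by hypothesis.

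\emph{Case 1: $q-1$ has an odd prime divisor $\ell$.} Then $\ell\mid q-1$ and $\ell$ is odd, so $\ell$ is $q$-admissible, and we are done.

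\emph{Case 2: $q-1$ has no odd prime divisor.} Then $q-1=2^k$ for some $k\geq0$. If $k=0$ then $q=2$, and if $k=1$ then $q=3$; both are excluded. Hence $k\geq2$, so $4\mid q-1$, that is, $q\equiv1\pmod 4$. Taking $\ell=2$, we get $\ell\mid q-1$ with $q\equiv1\pmod4$, so $2$ is $q$-admissible.

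There is no real obstacle here. The only point needing care is that the hypothesis $q\neq2,3$ is used exactly to rule out $q-1\in\{1,2\}$, the only powers of two below $4$. We do not need any arithmetic of prime powers beyond the fact that $q-1$ is a positive integer.
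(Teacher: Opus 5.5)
Your proof is correct and is essentially the paper's argument: you take an odd prime divisor of $q-1$ when one exists, and otherwise use $\ell=2$ with $q\equiv1\pmod4$, with the hypothesis $q\neq2,3$ ruling out exactly $q-1\in\{1,2\}$. The only difference is how the cases are organized: the paper splits by the parity of $q$ and by $q \bmod 4$, finding an odd prime in $q-1$ or in $(q-1)/2$, whereas you split on whether $q-1$ is a power of $2$, which is slightly more direct.
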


\begin{proof}
If $q>2$ is even, then every prime divisor of the odd integer $q-1$ is odd
and hence $q$-admissible.  If $q\equiv1\pmod4$, then $\ell=2$ is
$q$-admissible.  Finally, suppose that $q>3$ and $q\equiv3\pmod4$.  The
integer $(q-1)/2$ is odd and greater than one, so any prime divisor of it is
odd, divides $q-1$, and is therefore $q$-admissible.
\end{proof}

We shall also use the following special case of lifting the exponent.

\begin{lemma}\label{lem:valuation}
Let $\ell$ be $q$-admissible.  For all integers $d,n\geq1$,
$$ v_\ell\bigl(q^{dn}-1\bigr)
 =v_\ell\bigl(q^d-1\bigr)+v_\ell(n).
$$\end{lemma}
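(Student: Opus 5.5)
This is the lifting-the-exponent lemma specialised to $x=q^d$, $y=1$. Put $Q=q^d$. Since $\ell\mid q-1$, we have $Q\equiv1\pmod\ell$, and $Q-1=q^d-1$. We need $v_\ell(Q^n-1)=v_\ell(Q-1)+v_\ell(n)$. The plan is to factor $n=\ell^{k}m$ with $\ell\nmid m$, handle the $m$-part and the $\ell$-part separately, and then combine the two.

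\emph{Coprime part.} Write
$$
 Q^m-1=(Q-1)\bigl(1+Q+\cdots+Q^{m-1}\bigr).
$$
The second factor is congruent to $m\not\equiv0\pmod\ell$, so
$$
 v_\ell(Q^m-1)=v_\ell(Q-1).
$$
The base $Q^m$ is still $\equiv1\pmod\ell$. It remains to show that each $\ell$-th power step raises the valuation by exactly one. Set $R=Q^{m\ell^{i}}$, so that $R\equiv1\pmod\ell$. Then
$$
 R^\ell-1=(R-1)\bigl(1+R+\cdots+R^{\ell-1}\bigr),
$$
and we must show that $S=\sum_{j<\ell}R^j$ has $v_\ell(S)=1$. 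Write $R=1+t$ with $\ell\mid t$. Expanding gives
$$
 S\equiv \ell+\binom{\ell}{2}t \pmod{t^2}.
$$

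\emph{Main obstacle: the case analysis for $\ell=2$.} If $\ell$ is odd, then $\ell\mid\binom{\ell}{2}$, so the correction term is divisible by $\ell t$, hence by $\ell^2$. The $O(t^2)$ terms are also divisible by $\ell^2$. Therefore $S\equiv\ell\pmod{\ell^2}$, which gives $v_\ell(S)=1$.

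If $\ell=2$, then $S=1+R=2+t$. Admissibility gives $q\equiv1\pmod4$. Consequently $Q=q^d$ and every power of it is $\equiv1\pmod4$, so $4\mid t$ and hence $v_2(S)=v_2(2+t)=1$. This is exactly where the clause $q\equiv1\pmod4$ in Definition~\ref{def:admissible} is used; without it the statement would fail, for example $v_2(3^2-1)=3\neq 1+1$.

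\emph{Conclusion.} Induct on $i=0,\dots,k-1$, adding $1$ to the valuation at each $\ell$-th power step. This yields
$$
 v_\ell(Q^n-1)=v_\ell(Q-1)+k=v_\ell(q^d-1)+v_\ell(n).
$$
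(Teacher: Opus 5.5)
Your proof is correct and follows essentially the same route as the paper. It uses the same splitting $n=\ell^k m$, the same geometric-sum argument for the prime-to-$\ell$ part, and an $\ell$-th power step that raises the valuation by exactly one, with $q\equiv1\pmod 4$ giving $v_2(R+1)=1$ when $\ell=2$. The only cosmetic difference is that for odd $\ell$ you expand the quotient $1+R+\cdots+R^{\ell-1}$ modulo $\ell^2$, whereas the paper expands $x^\ell-1$ binomially with $x=1+\ell^a c$.
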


\begin{proof}
Put $u=q^d$ and write $n=\ell^k r$, where $(r,\ell)=1$.  Since
$u\equiv1\pmod\ell$, the factorization
$$ u^r-1=(u-1)(1+u+\cdots+u^{r-1})
$$shows that
$$ v_\ell(u^r-1)=v_\ell(u-1),
$$because the second factor is congruent to $r$ modulo $\ell$.

Suppose first that $\ell$ is odd.  If $x\equiv1\pmod\ell$ and
$x=1+\ell^a c$ with $a\geq1$ and $\ell\nmid c$, the binomial expansion of
$x^\ell-1$ has first term $\ell^{a+1}c$, while every remaining term is
divisible by $\ell^{a+2}$.  Hence
$$ v_\ell(x^\ell-1)=v_\ell(x-1)+1.
$$Starting with $x=u^r$ and iterating this identity $k$ times gives
$$ v_\ell(u^n-1)=v_\ell(u-1)+k
 =v_\ell(u-1)+v_\ell(n).
$$
Now suppose that $\ell=2$.  Admissibility gives $u\equiv1\pmod4$, and since
$r$ is odd, also $u^r\equiv1\pmod4$.  For every $x\equiv1\pmod4$,
$$ v_2(x^2-1)=v_2(x-1)+v_2(x+1)=v_2(x-1)+1.
$$Iterating this identity $k=v_2(n)$ times, again starting with $x=u^r$,
proves the same formula for $\ell=2$.
\end{proof}

The following is the specialization of
\cite[Proposition~4.1]{FerragutiMicheli2016} to $K=\F_q$ and $p=\ell$.
The definition of $q$-admissibility supplies both the required roots of unity
and, when $\ell=2$, the condition that $-1$ be a square in $\F_q$.

\begin{lemma}\label{lem:kummer}
Let $\ell$ be $q$-admissible and let $g\in\F_q[X]$ be monic and
irreducible.  If $g(0)$ is not an $\ell$-th power in $\F_q$, then
$$ g\bigl(X^{\ell^j}\bigr)
$$is irreducible for every $j\geq0$.
\end{lemma}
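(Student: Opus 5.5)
We will prove the lemma through the degree-order relation: irreducibility of $g(X^{\ell^j})$ is equivalent to the statement that a root of it has degree exactly $d\ell^j$ over $\F_q$, where $d=\deg g$. Let $\alpha$ be a root of $g$ in an algebraic closure. Then $\alpha\neq0$, because $g(0)$ is not an $\ell$-th power and in particular is nonzero. Put $e=\ord(\alpha)$. Since $[\F_q(\alpha):\F_q]=d$, the least $n\geq1$ with $e\mid q^n-1$ is $n=d$. Now take $\beta$ with $\beta^{\ell^j}=\alpha$; it is a root of $g(X^{\ell^j})$, and that polynomial has degree $d\ell^j$. It therefore suffices to show $[\F_q(\beta):\F_q]=d\ell^j$.

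The first step is to compute $v_\ell(e)$. The norm satisfies
$$
N(\alpha)=\alpha^{(q^d-1)/(q-1)}=\pm g(0),
$$
with sign $(-1)^d$. We claim that $v_\ell(e)=v_\ell(q^d-1)$, that is, the $\ell$-part of $e$ is as large as possible. Suppose instead that $\ell^{v_\ell(q^d-1)}\nmid e$. Then $\alpha$ is an $\ell$-th power in $\F_{q^d}^*$, because that group is cyclic of order $q^d-1$. Hence $N(\alpha)$ is an $\ell$-th power in $\F_q$, and so is $g(0)=(-1)^dN(\alpha)$, provided $-1$ is an $\ell$-th power. For odd $\ell$ this holds since $-1=(-1)^\ell$. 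For $\ell=2$ it holds because $q\equiv1\pmod4$ makes $-1$ a square. This contradicts the hypothesis on $g(0)$, so the claim follows.

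The second step uses the order of $\beta$. Since $\beta^{\ell^j}=\alpha$, we have $\ord(\beta)=e\ell^j$ exactly. To see this, note that $\ord(\beta)$ divides $e\ell^j$, and the $\ell$-part of $e$ is positive because $v_\ell(q^d-1)\geq1$; this positivity forces the order to gain the full factor $\ell^j$. Now let $m=[\F_q(\beta):\F_q]$, the least $m\geq1$ with $e\ell^j\mid q^m-1$. Since $e\mid q^m-1$, we get $d\mid m$, so write $m=dn$. The $\ell$-adic condition then reads
$$
v_\ell(q^{dn}-1)\geq v_\ell(q^d-1)+j.
$$
By Lemma~\ref{lem:valuation} this is equivalent to $v_\ell(n)\geq j$. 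The prime-to-$\ell$ part of $e$ already divides $q^d-1$. Hence the minimal choice is $n=\ell^j$, giving $m=d\ell^j$, which proves irreducibility.

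The main obstacle is the first step, showing that $\alpha$ is not an $\ell$-th power in $\F_{q^d}$ via the norm. This is exactly where admissibility enters: it guarantees that $\ell\mid q-1$ and that the sign $(-1)^d$ is harmless. The remaining steps are routine once Lemma~\ref{lem:valuation} is in hand.
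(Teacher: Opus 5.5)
Your proof is correct, but it does not follow the paper, which gives no argument for this lemma. The paper quotes it as the specialization to $K=\F_q$, $p=\ell$ of Ferraguti--Micheli's Proposition~4.1. That result is a Kummer-type irreducibility criterion over a general field, and its hypotheses (a primitive $\ell$-th root of unity in $K$, and $-1$ a square when $\ell=2$) are what admissibility supplies.

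You instead give a self-contained finite-field proof that uses only tools already in the paper, namely the degree--order relation and Lemma~\ref{lem:valuation}. Your one new ingredient is the norm step: since $g(0)=(-1)^dN(\alpha)$ is not an $\ell$-th power, you get $v_\ell(\ord(\alpha))=v_\ell(q^d-1)$. After that, your computation is the degree computation in the proof of Lemma~\ref{lem:descent}, with $s=v_\ell(q^d-1)+j$ and therefore $t=j$. In effect, you show that under the non-$\ell$-th-power hypothesis the integer $t$ of the descent lemma is as large as possible.

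The citation buys brevity and generality, since it works over arbitrary base fields. Your route makes this step independent of the external reference and shows exactly where admissibility is used: $-1$ must be an $\ell$-th power to absorb the sign $(-1)^d$, and $q\equiv1\pmod 4$ is needed in the lifting-the-exponent formula when $\ell=2$.

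Two steps you state tersely deserve one line each:
\begin{itemize}
\item $\alpha$ is an $\ell$-th power in $\F_{q^d}^*$ exactly when its order divides $(q^d-1)/\ell$, which uses $\ell\mid q^d-1$.
\item $\ord(\beta)=e\ell^j$ follows from $\ord(\beta^{\ell^j})=\ord(\beta)/\gcd(\ord(\beta),\ell^j)$ together with $\ell\mid e$.
\end{itemize}
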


The following factor-descent statement is a special case of known
factorization results for power substitutions; see Butler~\cite{Butler1955}
and Graner~\cite[Theorems~3 and~14]{Graner2026}.  We give an elementary proof
adapted to the bounded-core construction.  It shows that every irreducible
factor of the relevant power substitution has a presentation with a core of
the same degree as $a$.

\begin{lemma}\label{lem:descent}
Let $\ell$ be $q$-admissible.  Let $a\in\F_q[X]$ be monic irreducible,
$a\neq X$, and put $d=\deg a$.  If a monic irreducible polynomial $h$ divides
$a(X^{\ell^j})$, then
$$ h(X)=b\bigl(X^{\ell^t}\bigr)
$$for some integer $0\leq t\leq j$ and some monic irreducible polynomial
$b\in\F_q[X]$ of degree $d$.
\end{lemma}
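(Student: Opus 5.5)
Let $\gamma$ be a root of $h$ in an algebraic closure; then $\alpha=\gamma^{\ell^j}$ is a root of $a$, so $\alpha\neq0$ (because $a\neq X$) and $[\F_q(\alpha):\F_q]=d$. Put $e=\ord(\alpha)$ and $m=\ord(\gamma)$. The plan is to find the largest $t\le j$ for which $\gamma^{\ell^{t}}$ still has an irreducible polynomial $b$ of degree $d$. The polynomial $b$ will be the minimal polynomial of $\beta=\gamma^{\ell^{j-t}}$ for a suitable $t$. We will then show $h(X)=b(X^{\ell^{t}})$ by checking that $b(X^{\ell^t})$ is irreducible. Since $\gamma$ is a root of $b(X^{\ell^t})$ and $h$ is the minimal polynomial of $\gamma$, it suffices to prove $\deg h=\ell^t d$, that is, $[\F_q(\gamma):\F_q]=\ell^t d$.

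First we compute the degrees via orders, using the degree-order relation. Since $\gamma^{\ell^j}=\alpha$, we have $m=e\ell^{s}$ for some $0\le s\le j$, with $m$ and $e$ sharing the same prime-to-$\ell$ part. Consider the elements $\gamma_i=\gamma^{\ell^{i}}$ for $0\le i\le j$, with $\gamma_j=\alpha$. Their orders are $\ord(\gamma_i)=e\ell^{\max(s-i,0)}$ when $v_\ell(e)\ge1$. When $v_\ell(e)=0$ and $s>0$, a jump occurs and the formula is adjusted accordingly. In every case $\ord(\gamma_i)$ equals $e$ times a power of $\ell$.

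Now let $n$ be the least integer with $\ord(\gamma_i)\mid q^n-1$. The prime-to-$\ell$ part of the order requires $d\mid n$, because the prime-to-$\ell$ part of $e$ already forces this. Let me make this precise. We have $e\mid q^d-1$. The prime-to-$\ell$ part $e'$ of $e$ has multiplicative order $d'$ dividing $d$. The $\ell$-part of the orders is handled by Lemma~\ref{lem:valuation}: $v_\ell(q^{dn}-1)=v_\ell(q^d-1)+v_\ell(n)$. If $c=v_\ell(q^d-1)$, then $\ell^{c+k}\mid q^{dn}-1$ exactly when $\ell^k\mid n$. This exponent behaves cleanly only along multiples of $d$. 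For a general $n$ we also need $v_\ell(q^n-1)$ to follow the same rule, and admissibility gives this: since $\ell\mid q-1$, we have $v_\ell(q^n-1)=v_\ell(q-1)+v_\ell(n)$ for all $n$. So the $\ell$-part condition reads $v_\ell(\ord)\le v_\ell(q-1)+v_\ell(n)$. Consequently the degree of $\gamma_i$ is $\operatorname{lcm}(d',\ell^{k_i})$ with $k_i=\max(0,v_\ell(\ord\gamma_i)-v_\ell(q-1))$. This identifies $d=\operatorname{lcm}(d',\ell^{k_j})$.

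With this formula, the degrees $\deg\gamma_i$ are determined by the $\ell$-exponents $k_i$, and $k_i$ decreases by exactly one per step while positive, because $v_\ell(\ord\gamma_i)$ drops by one each time it exceeds $v_\ell(q-1)\ge1$. Choose $t$ to be the number of indices $i$ for which the degree exceeds $d$. More precisely, let $t$ be the least integer such that $\deg\gamma_{j-t'}=d$ fails for $t'>t$. Equivalently, set $t=\max(0,k_0-\max(k_j,v_\ell(d')))$, adjusted so that $\deg\gamma_{j-t}=d$. Then $\deg\gamma_i=\ell^{j-t-i}\cdot$ (the previous degree) for $i<j-t$, which gives $[\F_q(\gamma):\F_q]=\ell^t d$. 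Since $d'$ is prime to $\ell$, the lcm structure makes each step multiply the degree by exactly $\ell$ once $k_i$ exceeds $v_\ell(d)$. Setting $b$ to be the minimal polynomial of $\beta=\gamma_{j-t}$ completes the argument.

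The main obstacle is the careful bookkeeping of the $\ell$-adic valuations of orders. In particular, we must verify that once the degree first exceeds $d$ it multiplies by exactly $\ell$ at each step. We must also treat the case $v_\ell(e)\le v_\ell(q-1)$, where the early roots add no degree. Both points rest on admissibility through Lemma~\ref{lem:valuation}. An alternative fallback is to apply Lemma~\ref{lem:kummer} to $b$ directly. For this we would show that $b(0)$ is a non-$\ell$-th power whenever the next step would raise the degree, which is the standard Kummer dichotomy.
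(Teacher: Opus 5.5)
Your strategy is sound and differs from the paper's, but as written the final step fails because of an indexing slip, and one supporting claim is false.

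\textbf{The index slip.} Your formula $t=\max(0,k_0-\max(k_j,v_\ell(d')))$ counts the indices $i$ with $\deg\gamma_i>d$. It is therefore the \emph{least} $i$ with $\deg\gamma_i=d$, not the largest; $i=j$ always works and would give $b=a$. You then take $b$ to be the minimal polynomial of $\gamma_{j-t}=\gamma^{\ell^{j-t}}$. But then $\gamma$ is a root of $b(X^{\ell^{j-t}})$, not of $b(X^{\ell^t})$. Moreover, $\gamma_{j-t}$ need not have degree $d$ at all: when $t=j>0$ it is $\gamma$ itself, e.g.\ for $h=X^{2^j}+2$ over $\F_5$. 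The right choice is $b=$ the minimal polynomial of $\gamma^{\ell^t}$.

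\textbf{The false claim.} The statement ``$d'$ is prime to $\ell$'' is wrong: for $q=7$, $\ell=3$, $e'=19$ one has $7^3\equiv1\pmod{19}$, so $d'=3$. You do not need it. Use your correct formula $\deg\gamma_i=\operatorname{lcm}(d',\ell^{k_i})$, together with $k_i=\max(0,k_0-i)\ge k_j$ and $v_\ell(d)=\max(v_\ell(d'),k_j)$. These give
\[
 \deg\gamma^{\ell^i}=d\,\ell^{\max(0,t-i)}\quad(0\le i\le j),\qquad t=\max\bigl(0,k_0-v_\ell(d)\bigr)\le j,
\]
where the last inequality holds because $k_j\le v_\ell(d)$. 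This single formula replaces the ``bookkeeping'' you defer in your last paragraph. No separate case for small $v_\ell(e)$ is needed, and no ``jump'' occurs, since $\ord(\gamma^{\ell^i})=e\,\ell^{\max(s-i,0)}$ holds uniformly. It also gives $\deg h=\ell^t d=\deg b(X^{\ell^t})$, so $h=b(X^{\ell^t})$.

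\textbf{Comparison with the paper.} With these repairs your argument is a valid alternative. The paper avoids $d'$ and the lcm entirely. It measures the degree of the root of $h$ (your $\gamma$, its $\beta$) relative to $K=\F_q(\alpha)=\F_{q^d}$, which lies inside $\F_q(\beta)$. The prime-to-$\ell$ part of $\ord(\beta)$ already divides $q^d-1$. Hence Lemma~\ref{lem:valuation} with base $q^d$ gives $[K(\beta):K]=\ell^t$ directly, with $t=\max\{0,s-E\}$. This is the same integer as yours, since $E=v_\ell(q^d-1)=v_\ell(q-1)+v_\ell(d)$. Then $\beta^{\ell^t}\in K$ because its order divides $q^d-1$, and the tower law yields $\deg h=d\ell^t$.

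Working relative to $K$ is what makes the paper's proof short. Your absolute computation over $\F_q$ uses only the case $d=1$ of the valuation lemma and yields the whole degree profile along the chain $\gamma^{\ell^i}$. The cost is that you must track the $\ell$-part of $\ord_{e'}(q)$, which is exactly where your errors arose.
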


\begin{proof}
Let $\beta$ be a root of $h$ and put $\alpha=\beta^{\ell^j}$.  Then
$\alpha$ is a root of $a$.  Since $a\neq X$, both $\alpha$ and $\beta$ are
nonzero, and
$$ K=\F_q(\alpha)=\F_{q^d}.
$$Write
$$ \ord(\beta)=\ell^s m,\qquad (m,\ell)=1,
$$and set
$$ E=v_\ell(q^d-1),\qquad t=\max\{0,s-E\}.
$$Now
$$ \ord(\alpha)=\ell^{\max\{s-j,0\}}m.
$$This order divides $q^d-1$, because $\alpha\in K$.  Thus
$m\mid q^d-1$ and $\max\{s-j,0\}\leq E$.  If $s>E$, the latter inequality
gives $s-E\leq j$; if $s\leq E$, then $t=0$.  Hence $t\leq j$ in both cases.

By the degree-order relation above, applied with $Q=q^d$, the degree
$[K(\beta):K]$ is the least positive integer $n$ for which
$\ord(\beta)$ divides $q^{dn}-1$.  The prime-to-$\ell$ part $m$ already
divides $q^d-1$, while Lemma~\ref{lem:valuation} shows that the $\ell$-part
divides precisely when $v_\ell(n)\geq t$.  The least such $n$ is $\ell^t$.
On the other hand, $\gamma=\beta^{\ell^t}$ has order dividing $q^d-1$, and
hence $\gamma\in K$.  Moreover,
$$ \alpha=\gamma^{\ell^{j-t}},
$$so
$K=\F_q(\alpha)\subseteq\F_q(\gamma)\subseteq K$ and therefore
$\F_q(\gamma)=K$.  Let $b$ be the minimal polynomial of $\gamma$ over
$\F_q$.  Since $\gamma\neq0$, we have $b\neq X$.  Thus $b$ is monic
irreducible of degree $d$, and
$b(X^{\ell^t})$ has $\beta$ as a root.  Since
$K\subseteq\F_q(\beta)$, the tower law and the preceding degree computation
give
$$ \deg h=[\F_q(\beta):\F_q]
 =d\ell^t=\deg b(X^{\ell^t}).
$$Since both polynomials are monic and $h$ divides $b(X^{\ell^t})$, they are
equal.
\end{proof}

\begin{remark}\label{rem:F3-obstruction}
The admissibility hypothesis is essential for this uniform descent.  Over
$\F_3$, for example, take $a(X)=X+1$ and $j=2$.  Then
$$
 a(X^{2^2})=X^4+1
 =(X^2+X+2)(X^2+2X+2).
$$
Both quadratic factors are irreducible over $\F_3$, but neither has the form
$b(X^{2^t})$ with $\deg b=1$.  Thus the conclusion of
Lemma~\ref{lem:descent} can fail when $2$ is not $q$-admissible.  The
even-degree alternative in the Kummer criterion may still produce individual
irreducible towers over $\F_3$; what fails here is the uniform factor descent
needed for the whole bounded-core saturation.
\end{remark}

\section{The bounded-core construction}

Fix a $q$-admissible prime $\ell$ and an integer $D\geq1$.  We think of
$g(X^{\ell^j})$ as a polynomial with \emph{core} $g$ and \emph{scale} $j$.
The bounded-core saturation consists of every irreducible power substitution
whose core has degree at most $D$:
\begin{equation}\label{eq:AD}
\begin{split}
 \A_D(\ell)=\{X\}\cup\bigl\{g(X^{\ell^j}):{}&g\in\F_q[X]
 \text{ monic irreducible},\ g\neq X,\\
 &1\leq\deg g\leq D,\ j\geq0,\
 g(X^{\ell^j})\text{ irreducible}\bigr\}.
\end{split}
\end{equation}
For $P\in\A_D(\ell)$, $P\neq X$, define its \emph{core degree} by
\begin{equation}\label{eq:rho}
 \rho(P)=\min\{\deg g:P=g(X^{\ell^j})\text{ for some }g,j
 \text{ occurring in \eqref{eq:AD}}\},
\end{equation}
and put $\rho(X)=0$.

The same polynomial may a priori admit more than one presentation as a power
substitution.  Taking the minimum in \eqref{eq:rho} makes $\rho(P)$ intrinsic;
no uniqueness of the core is being assumed.

\begin{proposition}\label{prop:bounded-core}
Let $Q\in\A_D(\ell)$ with $Q\neq X$.  Every monic irreducible factor $P$ of
$Q(X)-Q(0)$ belongs to $\A_D(\ell)$ and satisfies
$$
 \rho(P)<\rho(Q).
$$
In particular, $\A_D(\ell)$ is an $F$-set.
\end{proposition}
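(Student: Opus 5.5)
The plan is to make the shifted difference explicit through a chosen presentation of $Q$, and then to let Lemma~\ref{lem:descent} do the work on each irreducible factor. Since $Q\in\A_D(\ell)$ and $Q\neq X$, I will choose a presentation $Q=g(X^{\ell^j})$ realizing the minimum in \eqref{eq:rho}. Then $g$ is monic irreducible, $g\neq X$, and $\deg g=\rho(Q)$ with $1\leq \rho(Q)\leq D$.

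\textbf{Step 1: factor the shifted difference.} Since $Q(0)=g(0)$, we have
$$
 Q(X)-Q(0)=G\bigl(X^{\ell^j}\bigr),\qquad G(Y)=g(Y)-g(0)=Y\,c(Y),
$$
where $c\in\F_q[Y]$ is monic of degree $\deg g-1$. Factor $c=\prod_i a_i$ into monic irreducibles, allowing $c=1$ when $\deg g=1$. Each $a_i$ satisfies $\deg a_i\leq \deg g-1$, and some $a_i$ may equal $X$. Substituting $Y=X^{\ell^j}$ gives
$$
 Q(X)-Q(0)=X^{\ell^j}\prod_i a_i\bigl(X^{\ell^j}\bigr).
$$
Hence every monic irreducible factor $P$ of $Q(X)-Q(0)$ is either $X$ itself, or else divides $a_i(X^{\ell^j})$ for some $a_i\neq X$. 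The case $a_i=X$ only contributes further powers of $X$.

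\textbf{Step 2: the two cases.} If $P=X$, then $P\in\A_D(\ell)$ and $\rho(P)=0<1\leq\rho(Q)$.

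Otherwise $P\mid a(X^{\ell^j})$ with $a$ monic irreducible, $a\neq X$, and $\deg a\leq\deg g-1$. By Lemma~\ref{lem:descent}, $P=b(X^{\ell^t})$ for some $0\leq t\leq j$ and some monic irreducible $b$ with $\deg b=\deg a$.
\begin{itemize}
\item We have $b\neq X$, because $b$ is the minimal polynomial of a nonzero element; equivalently, since $X\nmid a(X^{\ell^j})$, the factor $P$ is not a power of $X$.
\item We have $1\leq\deg b\leq\deg g-1<D$.
\item The polynomial $b(X^{\ell^t})=P$ is irreducible.
\end{itemize}
So $P$ is one of the presentations listed in \eqref{eq:AD}, hence $P\in\A_D(\ell)$. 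By minimality in \eqref{eq:rho}, $\rho(P)\leq\deg b<\deg g=\rho(Q)$. In both cases the strict inequality holds.

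\textbf{Step 3: the $F$-set property.} It remains to check the element $X$ of $\A_D(\ell)$. Its shifted difference is $X-0=X$, whose only monic irreducible divisor is $X\in\A_D(\ell)$. Together with Steps 1 and 2, this shows that $\A_D(\ell)$ is closed in the required sense, so it is an $F$-set.

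\textbf{Main obstacle.} The only delicate point is that $\rho$ is defined as a minimum over possibly several presentations. This is handled by two choices: on the $Q$ side, use a presentation attaining $\rho(Q)$; on the $P$ side, $\rho(P)$ is bounded above by the degree of the particular core $b$ supplied by Lemma~\ref{lem:descent}. All the substantive work, namely that factors of power substitutions are again power substitutions with cores of the same degree, is already contained in Lemma~\ref{lem:descent}.
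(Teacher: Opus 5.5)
Your proof is correct and follows essentially the same route as the paper. You choose a presentation $Q=g(X^{\ell^j})$ attaining $\rho(Q)$, factor $g(Y)-g(0)$, substitute, apply Lemma~\ref{lem:descent} to each factor other than $X$, and bound $\rho(P)\leq\deg b<\deg g$. The differences are cosmetic: you absorb extra powers of $Y$ into $c(Y)$ instead of separating out $Y^v$, and you check $b\neq X$ explicitly where the paper leaves it to the proof of the descent lemma.
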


\begin{proof}
Choose a representation of minimal core degree,
$$
 Q(X)=g(X^{\ell^j})
$$
with $\deg g=\rho(Q)$.  Since $g\neq X$ is irreducible, $g(0)\neq0$.
Factor
$$
 g(Y)-g(0)=Y^v\prod_{s=1}^u a_s(Y)^{e_s},
$$
where $v\geq1$ and the $a_s$ are monic irreducibles different from $Y$; the
product is allowed to be empty.  Every $a_s$ has degree strictly smaller than
$\deg g$, because
$$
 \sum_{s=1}^u e_s\deg a_s=\deg g-v\leq\deg g-1.
$$
After substituting $Y=X^{\ell^j}$, we obtain
$$
 Q(X)-Q(0)=X^{v\ell^j}
 \prod_{s=1}^u a_s(X^{\ell^j})^{e_s}.
$$
If $P=X$, then $P\in\A_D(\ell)$ and
$\rho(P)=0<\rho(Q)$.  Otherwise, $P$ divides
$a_s(X^{\ell^j})$ for some $s$.  Lemma~\ref{lem:descent} gives
$$
 P(X)=b(X^{\ell^t})
$$
for some $0\leq t\leq j$ and some monic irreducible $b$ with
$$
 \deg b=\deg a_s<\deg g=\rho(Q).
$$
Since $P$ is irreducible, this is an admissible presentation in
\eqref{eq:AD}.  Thus $P\in\A_D(\ell)$ and
$\rho(P)\leq\deg b<\rho(Q)$.

For $Q=X$, the only irreducible factor of $Q(X)-Q(0)=X$ is already in the
set.  Hence $\A_D(\ell)$ is an $F$-set.
\end{proof}

\begin{corollary}\label{cor:bounded-core}
The set $\A_D(\ell)$ is an infinite, non-trivial $F$-set and
$$
 w\bigl(\A_D(\ell)\bigr)\leq D.
$$
More precisely,
$$
 \A_D(\ell)^{(n)}
 \subseteq\{P\in\A_D(\ell):\rho(P)\leq D-n\}
 \qquad(0\leq n\leq D).
$$
\end{corollary}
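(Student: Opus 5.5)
The corollary has three parts: the width bound, infiniteness, and non-triviality. For the width bound I will apply Lemma~\ref{lem:rank-bound} to $A=\A_D(\ell)$ with the core-degree map $\rho$ from \eqref{eq:rho}. I need to check its three hypotheses.

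First, $\rho$ takes values in $\{0,1,\ldots,D\}$, since every presentation in \eqref{eq:AD} has $1\leq\deg g\leq D$, and $\rho(X)=0$ by definition. Second, the fibre $\rho^{-1}(0)$ is $\{X\}$, which is finite. This holds because any $P\neq X$ has $\rho(P)\geq1$, its cores having degree at least one. Third, whenever $Q\neq X$ and $P\mid Q(X)-Q(0)$ with $P\in\A_D(\ell)$ monic irreducible, Proposition~\ref{prop:bounded-core} gives $\rho(P)<\rho(Q)$. Lemma~\ref{lem:rank-bound} then yields the displayed inclusion $\A_D(\ell)^{(n)}\subseteq\{P:\rho(P)\leq D-n\}$ for $0\leq n\leq D$, and hence $w(\A_D(\ell))\leq D$. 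That $\A_D(\ell)$ is an $F$-set at all is part of Proposition~\ref{prop:bounded-core}.

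For infiniteness I will use Lemma~\ref{lem:kummer}. Since $\ell\mid q-1$, the group $\F_q^*$ contains a non-$\ell$-th power $c$. The linear polynomial $g(X)=X-c$ is monic irreducible, differs from $X$, has degree $1\leq D$, and its constant term $-c$ is a non-$\ell$-th power. This last point needs a small check. Put $\varepsilon=-1$. When $\ell$ is odd, $-1=(-1)^\ell$ is an $\ell$-th power. When $\ell=2$, admissibility gives $q\equiv1\pmod4$, so $-1$ is a square. In both cases $-c$ is an $\ell$-th power exactly when $c$ is. Lemma~\ref{lem:kummer} then makes every $X^{\ell^j}-c$ irreducible, and these polynomials have pairwise distinct degrees $\ell^j$. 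So $\A_D(\ell)$ is infinite.

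Non-triviality is the step needing an actual exhibited element. I must produce some $f\in I_q$ outside $\A_D(\ell)$, and I will use a degree argument. Every element other than $X$ has degree $\deg g\cdot\ell^j$ with $1\leq\deg g\leq D$. Choose a prime $p>D$ with $p\neq\ell$. An irreducible $f$ of degree $p$ exists, since irreducibles of every degree exist over $\F_q$. Suppose $p=\deg g\cdot\ell^j$. Because $p\neq\ell$ is prime, this forces $j=0$ and $\deg g=p>D$, which is a contradiction. Since $p>1$, we also have $f\neq X$. Hence $f\notin\A_D(\ell)$ and the set is proper. I expect no real obstacle here. The only subtle point is the sign of the constant term in the infiniteness step, and that is resolved by the admissibility condition.
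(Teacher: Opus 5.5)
Your proposal is correct and follows the paper's proof essentially step for step. It applies Lemma~\ref{lem:rank-bound} to the core-degree map via Proposition~\ref{prop:bounded-core}, uses Kummer towers over a non-$\ell$-th-power constant for infiniteness, and exhibits an irreducible of prime degree $>\max\{D,\ell\}$ for non-triviality; the only difference is that the paper uses $X^{\ell^j}+c$, whose constant term is $c$ itself, so your correct check that $-c$ is a non-$\ell$-th power is not needed.
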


\begin{proof}
Proposition~\ref{prop:bounded-core} and Lemma~\ref{lem:rank-bound} apply to
the core-degree map, whose zero fibre is $\{X\}$.  This proves the asserted
bound on the filtration and on the width.

Choose $c\in\F_q^*$ which is not an $\ell$-th power.  By
Lemma~\ref{lem:kummer}, all the polynomials
$$
 X^{\ell^j}+c\qquad(j\geq0)
$$
are irreducible and belong to $\A_D(\ell)$.  They have distinct degrees, so
$\A_D(\ell)$ is infinite.

Finally, choose a prime number $n>\max\{D,\ell\}$.  By the standard
existence theorem for irreducible polynomials over finite fields, there is a
monic irreducible polynomial of degree $n$ over $\F_q$.  No such polynomial
belongs to $\A_D(\ell)$: the degree of every element different from $X$ is
$d\ell^j$ with $d\leq D$, whereas $n$ is prime, $n>D$, and $n\neq\ell$.
Therefore $\A_D(\ell)\neq I_q$.
\end{proof}

\begin{remark}\label{rem:saturation}
The saturation ranges over all irreducible cores of degree at most $D$, not
only over the cores belonging to the ladder constructed below.  This is what
absorbs the additional factors introduced by Dirichlet's theorem.  The price
is that the exact width of $\A_D(\ell)$ is not read directly from its
definition.  We shall only need the finite upper bound and a lower bound
provided by parallel ladders; a tail of the nullity filtration will recover
the exact prescribed width.
\end{remark}

\section{Dirichlet ladders and exact width}

We state precisely the second external input used in the paper.

\begin{theorem}\label{thm:dirichlet}
Let $A,M\in\F_q[X]$, with $M$ nonconstant and $\gcd(A,M)=1$.  For every
integer $N\geq0$, there is a monic irreducible polynomial $P$ of degree greater
than $N$ such that
$$
 P\equiv A\pmod M.
$$
In fact, there are infinitely many such polynomials.
\end{theorem}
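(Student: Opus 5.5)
This is Dirichlet's theorem for $\F_q[X]$, the function-field analogue of Dirichlet's theorem on primes in arithmetic progressions. The plan is to derive it from the prime polynomial theorem in arithmetic progressions, as in Rosen's \emph{Number Theory in Function Fields}, Theorem~4.8. The relevant counting statement is the following. Let $\Phi(M)=\#(\F_q[X]/M)^*$. Then for every invertible class $A$ modulo $M$, the number $\pi(n;A,M)$ of monic irreducible $P$ of degree $n$ with $P\equiv A\pmod M$ satisfies
$$
 \pi(n;A,M)=\frac{q^n}{\Phi(M)\,n}+O\Bigl(\frac{q^{n/2}}{n}\Bigr),
$$
with an implied constant depending only on $\deg M$. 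Granting this, the right-hand side is positive for all sufficiently large $n$. Choosing such an $n>N$ produces the required $P$. Letting $N$ grow, or simply summing over all large $n$, gives infinitely many such polynomials.

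If I had to prove the counting statement rather than cite it, I would follow the standard route through Dirichlet characters.

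First, I would define the $L$-functions. For a character $\chi$ of $(\F_q[X]/M)^*$, set $L(s,\chi)=\sum_{f \text{ monic}}\chi(f)|f|^{-s}$, where $|f|=q^{\deg f}$. In the variable $u=q^{-s}$ this becomes a power series in $u$. For nontrivial $\chi$, the sum of $\chi$ over all monic $f$ of a fixed degree $n\ge\deg M$ vanishes, because those $f$ run uniformly over all residue classes modulo $M$. Hence $L(u,\chi)$ is a polynomial in $u$ of degree less than $\deg M$.

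Second, I would take logarithmic derivatives via the Euler product. This gives
$$
 \sum_{\deg f=n}\Lambda(f)\chi(f)=-\sum_i \alpha_i(\chi)^n,
$$
where the $\alpha_i(\chi)$ are the inverse roots of $L(u,\chi)$. Orthogonality of characters then isolates the class of $A$. The main term $q^n/\Phi(M)$ comes from the trivial character. Prime powers $P^k$ with $k\ge2$ contribute only $O(q^{n/2})$.

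Third, I would need a bound on the inverse roots. The main obstacle is showing $|\alpha_i(\chi)|\le q^{1/2}$ (the Riemann hypothesis, via Weil), or at least the weaker $|\alpha_i(\chi)|<q$. The weaker bound, which is equivalent to $L(1,\chi)\neq0$ at $u=q^{-1}$ for nontrivial $\chi$, already suffices for positivity for large $n$. Once it holds, the error terms are $o(q^n)$ uniformly.

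For the nonvanishing itself, I would use the classical argument. If some $L(u,\chi)$ vanished at $u=q^{-1}$, the product $\prod_\chi L(u,\chi)$ would be holomorphic at $u=q^{-1}$, since the simple pole from the trivial character would be cancelled. But this product has nonnegative coefficients and diverges there. Complex characters are handled by pairing $\chi$ with $\bar\chi$. The quadratic case would be treated as in the classical proof, or via the Weil bound.

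Since this is standard, in the paper I would simply cite Rosen's Theorem~4.8 (or Kornblum's theorem) rather than reprove it.
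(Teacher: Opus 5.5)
Your proposal is correct and agrees with the paper, which does not prove this statement but simply cites the function-field Dirichlet theorem (Kornblum--Landau). You cite Rosen's Theorem~4.8 in the same way and read off a prime in the class of $A$ in every sufficiently large degree. In your optional sketch, two corrections are worth making. First, $|\alpha_i(\chi)|<q$ means $L(u,\chi)\neq0$ on the whole circle $|u|=q^{-1}$, which is a priori stronger than $L(q^{-1},\chi)\neq0$; the latter gives only the density form, though that already suffices here since only infinitely many $P$ are required. Second, asserting divergence of $\prod_\chi L(u,\chi)$ at $u=q^{-1}$ is circular. The clean contradiction is that this product would be a polynomial while coefficientwise dominating $\sum_{(f,M)=1}u^{\Phi(M)\deg f}$, and this handles all nontrivial $\chi$ at once.
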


This is the function-field analogue of Dirichlet's theorem on primes in
arithmetic progressions; see, for example, Kornblum and
Landau~\cite{KornblumLandau1919}.  We need only the existence assertion and
the freedom to make the degree large.

A sequence $f_1,\ldots,f_r$ of irreducibles will be called a \emph{ladder} if
$f_{i+1}$ is a successor of $f_i$ for $1\leq i<r$.  A ladder forces its lower
members to survive successive nullity deletions.  The one-step Dirichlet
construction of successors already appears in
\cite[Example~3.3]{FerragutiMicheli2016}; here we iterate it while keeping a
fixed non-$\ell$-th-power constant term, and then repeat the resulting ladder
at infinitely many Kummer scales.

\begin{lemma}\label{lem:ladder}
Let $\ell$ be $q$-admissible and let $r\geq1$.  There are monic irreducible
polynomials $g_1,\ldots,g_r$ and an element $c\in\F_q^*$ such that
\begin{enumerate}
\item $c$ is not an $\ell$-th power and $g_i(0)=c$ for every $i$;
\item $\deg g_1<\cdots<\deg g_r$;
\item $g_i\mid g_{i+1}-g_{i+1}(0)$ for $1\leq i<r$;
\item $g_i(X^{\ell^k})$ is irreducible for every $i$ and every $k\geq0$.
\end{enumerate}
\end{lemma}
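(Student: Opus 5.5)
We fix once and for all an element $c\in\F_q^*$ that is not an $\ell$-th power; it exists because $\ell\mid q-1$ and $\F_q^*$ is cyclic. The ladder is then built by induction on $i$ using Theorem~\ref{thm:dirichlet}, and property~(4) comes for free from Lemma~\ref{lem:kummer}. That lemma shows that any monic irreducible $g$ with $g(0)=c$ satisfies (4). So the task reduces to producing a chain satisfying (1)--(3).

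For the base case, take $g_1=X+c$. It is monic, irreducible, and has $g_1(0)=c$.

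For the inductive step, suppose $g_1,\ldots,g_i$ have been chosen with $g_i$ monic irreducible and $g_i(0)=c$. We want a monic irreducible $g_{i+1}$ with $\deg g_{i+1}>\deg g_i$, $g_{i+1}(0)=c$, and $g_i\mid g_{i+1}-c$. These conditions amount to two congruences:
$$
 g_{i+1}\equiv c\pmod{g_i},\qquad g_{i+1}\equiv c\pmod X.
$$
Set $M=Xg_i$. Since $g_i(0)=c\neq0$, the polynomial $g_i$ is not $X$, so $g_i$ and $X$ are coprime. The two congruences therefore combine into the single condition $g_{i+1}\equiv c\pmod M$.

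Theorem~\ref{thm:dirichlet} applies with $A=c$, because $c$ is a nonzero constant and hence $\gcd(c,M)=1$. Taking $N=\deg g_i$, it gives a monic irreducible $g_{i+1}\equiv c\pmod M$ of degree greater than $\deg g_i$. Reducing modulo $X$ gives $g_{i+1}(0)=c$. Reducing modulo $g_i$ gives $g_i\mid g_{i+1}-c=g_{i+1}-g_{i+1}(0)$. Strict growth of the degrees holds by the choice of $N$.

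Iterating $r-1$ times produces $g_1,\ldots,g_r$ satisfying (1)--(3). Property~(4) then follows from Lemma~\ref{lem:kummer} applied to each $g_i$, since $g_i(0)=c$ is not an $\ell$-th power.

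There is no real obstacle here. The only point needing care is to prescribe the constant term through the congruence modulo $X$ rather than hoping for it. This simultaneously keeps the constant term equal to $c$, so that the Kummer criterion applies at every rung, and makes the successor condition a plain congruence. The coprimality of $X$ and $g_i$, needed to combine the two moduli, is exactly guaranteed by $c\neq0$.
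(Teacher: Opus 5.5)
Your proposal is correct and is essentially the paper's proof: you fix a non-$\ell$-th-power $c$, start with $g_1=X+c$, apply Theorem~\ref{thm:dirichlet} with modulus $Xg_i$ and residue $c$ to obtain $g_{i+1}$ of larger degree, and get property (4) from Lemma~\ref{lem:kummer}. The only cosmetic difference is that you use coprimality of $X$ and $g_i$ to combine two congruences into one; you actually only need the trivial implication from $g_{i+1}\equiv c \pmod{Xg_i}$ to the two separate congruences, which is how the paper phrases it.
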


\begin{proof}
Choose a non-$\ell$-th-power $c\in\F_q^*$ and put $g_1=X+c$.  Such a $c$
exists by cyclicity of $\F_q^*$.  Suppose that $g_i$ has been chosen.  Since
$c$ is a nonzero constant, $\gcd(c,Xg_i)=1$.  By
Theorem~\ref{thm:dirichlet}, one may choose a monic irreducible $g_{i+1}$, of
degree larger than $\deg g_i$, such that
$$
 g_{i+1}\equiv c\pmod{Xg_i}.
$$
Because $Xg_i$ divides $g_{i+1}-c$, evaluation at zero gives
$g_{i+1}(0)=c$, while $g_i\mid g_{i+1}-c$.  The last assertion follows from
Lemma~\ref{lem:kummer}.
\end{proof}

\begin{remark}\label{rem:ladder-not-fset}
The congruence used in the proof has the form
$$
 g_{i+1}(X)=c+Xg_i(X)h_i(X)
$$
for some $h_i\in\F_q[X]$.  After Kummer substitution,
$$
 g_{i+1}(X^{\ell^k})-c
 =X^{\ell^k}g_i(X^{\ell^k})h_i(X^{\ell^k}).
$$
Thus the lifted ladder contains the required predecessor
$g_i(X^{\ell^k})$, but it need not contain the irreducible factors of
$h_i(X^{\ell^k})$.  The ladder by itself is therefore not generally an
$F$-set.  The bounded-core family $\A_D(\ell)$ is the ambient $F$-set that
incorporates all these additional factors.
\end{remark}

\begin{theorem}\label{thm:admissible}
Suppose that $q$ admits a $q$-admissible prime.  For every integer $r\geq1$
there exists an infinite, non-trivial $F$-set in $\F_q[X]$ of width exactly
$r$.
\end{theorem}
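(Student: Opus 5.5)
Fix a $q$-admissible prime $\ell$ and $r\geq1$. Apply Lemma~\ref{lem:ladder} with some length $R\geq r$ to be chosen, obtaining $g_1,\ldots,g_R$ and $c$. Put $D=\deg g_R$ and $A=\A_D(\ell)$. By Corollary~\ref{cor:bounded-core}, $A$ is an infinite non-trivial $F$-set with $W:=w(A)\leq D<\infty$. The plan is to show $W\geq R$, then take $R=r$ (or any $R\geq r$) and pass to the tail $A^{(W-r)}$. By Lemma~\ref{lem:tails}, this tail has width exactly $W-(W-r)=r$.

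For the lower bound we use the lifted ladders. For each $k\geq0$ the polynomials $g_i(X^{\ell^k})$ are irreducible by item~4 of Lemma~\ref{lem:ladder}. Their cores have degree at most $D$ and are different from $X$ (since $g_i(0)=c\neq0$), so they lie in $A$. By Remark~\ref{rem:ladder-not-fset}, $g_i(X^{\ell^k})$ divides $g_{i+1}(X^{\ell^k})-c$, and $c$ is the constant term of $g_{i+1}(X^{\ell^k})$. So $g_{i+1}(X^{\ell^k})$ is a successor of $g_i(X^{\ell^k})$ inside $A$.

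We claim that $g_i(X^{\ell^k})\in A^{(n)}$ whenever $i+n\leq R$. The proof is by induction on $n$. The case $n=0$ is clear. For the step, suppose $i+n+1\leq R$. By the induction hypothesis both $g_i(X^{\ell^k})$ and $g_{i+1}(X^{\ell^k})$ lie in $A^{(n)}$. Hence $g_i(X^{\ell^k})$ has a successor in $A^{(n)}$, so it is not in $N(A^{(n)})$, and it survives to $A^{(n+1)}$.

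Taking $i=1$, the set $A^{(R-1)}$ contains $X^{\ell^k}+c$ for all $k\geq0$. These have distinct degrees, so $A^{(R-1)}$ is infinite and $W\geq R$. With $R=r$ we get $r\leq W\leq D$. Then $s=W-r$ satisfies $0\leq s\leq W$, and $B=A^{(s)}$ has width exactly $r$.

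Two properties of $B$ remain to be checked:
\begin{itemize}
\item $B$ is an $F$-set by the iterated observation that $A\setminus N(A)$ is an $F$-set.
\item $B\subseteq A\neq I_q$, so $B$ is non-trivial.
\end{itemize}
Since $w(B)=r\geq1$, $B$ is infinite.

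The main obstacle is the upper bound $W<\infty$, since the lower bound alone gives no exact value. This is already settled by Corollary~\ref{cor:bounded-core}. Exactness of the width does not require computing $W$; it comes from taking the tail of the filtration.
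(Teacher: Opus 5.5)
Your proof is correct and follows the paper's argument essentially step for step. The lifted Dirichlet ladder $g_i(X^{\ell^k})$ survives $r-1$ nullity deletions in $\A_D(\ell)$, which gives $r\le W\le D$, and the tail $\A_D(\ell)^{(W-r)}$ then has width exactly $r$ by Lemma~\ref{lem:tails}. The only differences are cosmetic: the auxiliary length $R\ge r$ is unnecessary, and you deduce infinitude of the tail from $w(B)=r\geq1$ instead of from the minimality of $W$, which is equivalent.
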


\begin{proof}
Fix a $q$-admissible prime $\ell$ and choose a ladder
$g_1,\ldots,g_r$ as in Lemma~\ref{lem:ladder}.  Put $D=\deg g_r$ and
$$
 G_{i,k}(X)=g_i(X^{\ell^k})
 \qquad(1\leq i\leq r,\ k\geq0).
$$
All these polynomials belong to $\A_D(\ell)$, and
\begin{equation}\label{eq:lifted-ladder}
 G_{i,k}\mid G_{i+1,k}-G_{i+1,k}(0)
 \qquad(1\leq i<r).
\end{equation}

Let $\A_D(\ell)^{(n)}$ be the nullity filtration.  We claim that
\begin{equation}\label{eq:survival}
 G_{i,k}\in\A_D(\ell)^{(n)}
 \quad\text{whenever}\quad 1\leq i\leq r-n.
\end{equation}
This is clear for $n=0$.  If it holds for $n$, then
$G_{i+1,k}\in\A_D(\ell)^{(n)}$ whenever $i\leq r-n-1$; by
\eqref{eq:lifted-ladder}, $G_{i,k}$ is not in the nullity of that set and
survives the next step.  This proves \eqref{eq:survival} by induction.

In particular, all the polynomials
$$
 G_{1,k}=X^{\ell^k}+c\qquad(k\geq0)
$$
belong to $\A_D(\ell)^{(r-1)}$.  They have distinct degrees, so that term is
infinite.  If
$$
 W=w\bigl(\A_D(\ell)\bigr),
$$
Corollary~\ref{cor:bounded-core} gives $W\leq D$, while the preceding
paragraph shows that the $(r-1)$-st term is infinite and hence that
$W\geq r$.  Define
$$
 \B_r=\A_D(\ell)^{(W-r)}.
$$
The set $\B_r$ is an $F$-set.  It is infinite because, by minimality of $W$,
every filtration term with index smaller than $W$ is infinite.  It is
non-trivial because $\B_r\subseteq\A_D(\ell)\neq I_q$, and
Lemma~\ref{lem:tails} gives
$$
 w(\B_r)=W-(W-r)=r.
$$
\end{proof}

\begin{example}\label{ex:width-three}
The first case beyond the previously known widths can be made completely
explicit.  Work over $\F_{13}$, take $\ell=2$ and $c=8$, and set
$$
 g_1=X+8,\qquad
 g_2=X^2+8X+8,\qquad
 g_3=X^3+8X^2+8X+8.
$$
The element $8$ is not a square in $\F_{13}$.  The discriminant of $g_2$ is
$6$, also a nonsquare, and direct evaluation shows that $g_3$ has no root in
$\F_{13}$.  Hence all three polynomials are irreducible.  Moreover,
$$
 g_2-8=Xg_1,\qquad g_3-8=Xg_2.
$$
Since $2$ is $13$-admissible, Lemma~\ref{lem:kummer} lifts this ladder at
every scale.  It follows that $\A_3(2)^{(2)}$ is infinite, so
$w(\A_3(2))\geq3$.  Corollary~\ref{cor:bounded-core} gives the reverse
inequality, and therefore
$$
 w\bigl(\A_3(2)\bigr)=3.
$$
Thus, in this example, the saturated bounded-core family itself already has
the desired width and no passage to a tail is required.
\end{example}

\begin{remark}\label{rem:existential-tail}
The bounded-core set $\A_D(\ell)$ is explicit once the ladder and $D$ are
fixed, but the argument only places its width $W$ in the interval
$r\leq W\leq D$.  Accordingly, the tail $\B_r$ is an existence construction:
the proof does not determine the integer $W-r$ or list the elements removed
before that stage.  No such determination is needed for the exact-width
statement.
\end{remark}

\begin{proof}[Proof of Theorem~\ref{thm:main}]
By Lemma~\ref{lem:admissible-exists}, every $q\neq2,3$ admits a
$q$-admissible prime.  The conclusion follows from
Theorem~\ref{thm:admissible}.
\end{proof}

\begin{remark}
Independently of the theorem, the finite set $\{X\}$ is a non-trivial
$F$-set of width zero.  The argument does not decide whether every positive
finite width occurs over $\F_2$ and $\F_3$.  Those are the only remaining
fields for the finite-width problem.
\end{remark}

\end{document}